\theoremstyle{plain}
\newtheorem{thm}{Theorem}[section]
\newtheorem{cor}{Corollary}[section]
\theoremstyle{remark}
\newtheorem{rem}{Remark}[section]
\DeclareMathOperator{\td}{d}
\numberwithin{equation}{section}
\begin{document}

\title[Integral representation and properties of Cauchy numbers]
{An integral representation, complete monotonicity, and inequalities of Cauchy numbers of the second kind}

\author[F. Qi]{Feng Qi}
\address{College of Mathematics, Inner Mongolia University for Nationalities, Tongliao City, Inner Mongolia Autonomous Region, 028043, China; Department of Mathematics, College of Science, Tianjin Polytechnic University, Tianjin City, 300387, China; Institute of Mathematics, Henan Polytechnic University, Jiaozuo City, Henan Province, 454010, China}
\email{\href{mailto: F. Qi <qifeng618@gmail.com>}{qifeng618@gmail.com}, \href{mailto: F. Qi <qifeng618@hotmail.com>}{qifeng618@hotmail.com}, \href{mailto: F. Qi <qifeng618@qq.com>}{qifeng618@qq.com}}
\urladdr{\url{http://qifeng618.wordpress.com}}

\begin{abstract}
In the paper, the author establishes an integral representation for Cauchy numbers of the second kind, finds the complete monotonicity, minimality, and logarithmic convexity of Cauchy numbers of the second kind, and presents some inequalities for determinants and products of Cauchy numbers of the second kind.
\end{abstract}

\keywords{Cauchy number of the second kind; integral representation; completely monotonic function; completely monotonic sequence; minimality; logarithmic convexity; inequality; majorization; determinant; product}

\subjclass[2010]{Primary 05A10, 11B83, 11R33, 97I30; Secondary 26A48, 26A51, 26D99, 30E20, 33B99}


\maketitle

\section{Introduction}

According to~\cite[p.~293--294]{Comtet-Combinatorics-74}, there are two kinds of Cauchy numbers which may be defined respectively by
\begin{equation}
C_n=\int_0^1\langle x\rangle_n\td x
\quad\text{and}\quad
c_n=\int_0^1(x)_n\td x,
\end{equation}
where
\begin{equation}
\langle x\rangle_n=
\begin{cases}
x(x-1)(x-2)\dotsm(x-n+1), & n\ge1\\
1,& n=0
\end{cases}
\end{equation}
and
\begin{equation}
(x)_n=
\begin{cases}
x(x+1)(x+2)\dotsm(x+n-1), & n\ge1\\
1, & n=0
\end{cases}
\end{equation}
are respectively called the falling and rising factorials. The coefficients expressing rising factorials $(x)_n$ in terms of falling factorials $\langle x\rangle_n$ are called Lah numbers. Lah numbers have an interesting meaning in combinatorics: they count the number of ways a set of $n$ elements can be partitioned into $k$ nonempty linearly ordered subsets.
Shortly speaking, Cauchy numbers play important roles in some fields, such as approximate integrals, Laplace summation formula, and difference-differential equations, and are also related to some famous numbers such as Stirling numbers, Bernoulli numbers, and harmonic numbers. Therefore, Cauchy numbers deserve to be studied.
\par
It is known~\cite[p.~294]{Comtet-Combinatorics-74} that Cauchy numbers of the second kind $c_k$ may be generated by
\begin{equation}
\frac{-t}{(1-t)\ln(1-t)}=\sum_{n=0}^\infty c_n\frac{t^n}{n!}
\end{equation}
which is equivalent to
\begin{equation}\label{Norlund-No-dfn}
\frac{t}{(1+t)\ln(1+t)}=\sum_{n=0}^\infty(-1)^nc_n\frac{t^n}{n!}.
\end{equation}
The first few Cauchy numbers of the second kind $c_k$ are
\begin{equation}
c_0=1,\quad c_1=\frac12,\quad c_2=\frac56,\quad c_3=\frac94, \quad c_4=\frac{251}{30}, \quad c_5=\frac{475}{12}, \quad c_6=\frac{19087}{84}.
\end{equation}
\par
In this paper, we will establish an integral representation, find the complete monotonicity, minimality, and logarithmic convexity, and present some inequalities of Cauchy numbers of the second kind $c_n$.

\section{An integral representation of Cauchy numbers}

We first establish an integral representation of Cauchy numbers of the second kind $c_n$.

\begin{thm}\label{Norlund-No-int-thm}
For $n\in\{0\}\cup\mathbb{N}$, Cauchy numbers of the second kind $c_n$ have an integral representation
\begin{equation}\label{Norlund-No-int-eq}
c_n=n!\int_0^\infty \frac{\td u}{u[\pi^2+(\ln u)^2](1+u)^{n}}.
\end{equation}
\end{thm}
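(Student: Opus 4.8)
The plan is to show that the right-hand side of~\eqref{Norlund-No-int-eq} produces exactly the Maclaurin coefficients appearing in the generating function $\frac{-t}{(1-t)\ln(1-t)}=\sum_{n\ge0}c_nt^n/n!$ (the equation displayed just before~\eqref{Norlund-No-dfn}), and then to conclude by uniqueness of power-series coefficients. Denote by $n!\,I_n$ the quantity on the right of~\eqref{Norlund-No-int-eq}, so that $I_n=\int_0^\infty\frac{\td u}{u[\pi^2+(\ln u)^2](1+u)^n}$. Forming $\sum_{n\ge0}I_nt^n$ and interchanging sum and integral — valid for $|t|<1$ by dominated convergence, since the partial sums are bounded in modulus by $\frac1{1-|t|}\cdot\frac{1}{u[\pi^2+(\ln u)^2]}$, which under $u=e^v$ becomes $\frac1{1-|t|}\cdot\frac1{\pi^2+v^2}$ and is therefore integrable on $(0,\infty)$ — and then summing the resulting geometric series gives
\[
\sum_{n=0}^\infty I_nt^n=\int_0^\infty\frac{1+u}{u[\pi^2+(\ln u)^2]\,(1+u-t)}\,\td u .
\]

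Next I would split the rational part by partial fractions, $\frac{1+u}{u(1+u-t)}=\frac{1}{(1-t)u}-\frac{t}{(1-t)(1+u-t)}$, which reduces everything to the two integrals $A=\int_0^\infty\frac{\td u}{u[\pi^2+(\ln u)^2]}$ and $B(z)=\int_0^\infty\frac{\td u}{(u+z)[\pi^2+(\ln u)^2]}$ evaluated at $z=1-t>0$. The substitution $u=e^v$ gives $A=\int_{-\infty}^\infty\frac{\td v}{\pi^2+v^2}=1$. The integral $B(z)$ is the decisive ingredient, and the claim is
\[
B(z)=\frac1{\ln z}-\frac1{z-1},
\]
i.e.\ that $\frac1{\ln z}-\frac1{z-1}$ is the Stieltjes transform of the measure $\frac{\td u}{\pi^2+(\ln u)^2}$ on $(0,\infty)$. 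Granting this, $B(1-t)=\frac1{\ln(1-t)}+\frac1t$, so the integral above equals $\frac1{1-t}-\frac{t}{1-t}\bigl(\frac1{\ln(1-t)}+\frac1t\bigr)=\frac{-t}{(1-t)\ln(1-t)}$; comparing coefficients of $t^n$ gives $I_n=c_n/n!$, which is~\eqref{Norlund-No-int-eq}.

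The crux is thus the evaluation of $B(z)$, which I would obtain by contour integration. For fixed $z\in\mathbb C\setminus(-\infty,0]$ with $z\ne1$, integrate $\frac1{(w-z)\ln w}$ around the circle $|w|=R$ indented by a keyhole that excises the branch cut $(-\infty,0]$ of $\ln w$ together with a small circle about $w=0$. On $\mathbb C\setminus(-\infty,0]$ the function $1/\ln w$ is holomorphic except for a simple pole at $w=1$ with residue $1$ (since $\ln w\sim w-1$ there). As $R\to\infty$ and the keyhole shrinks, the circular arcs vanish because $1/\ln w\to0$; the enclosed poles $w=z$ and $w=1$ contribute $\frac1{\ln z}+\frac1{1-z}$; and the two banks of the cut, where $\ln(ue^{\pm i\pi})=\ln u\pm i\pi$, combine into $\int_0^\infty\frac1{u+z}\bigl(\frac1{\ln u-i\pi}-\frac1{\ln u+i\pi}\bigr)\td u=2\pi i\,B(z)$. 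Equating the two evaluations yields $B(z)=\frac1{\ln z}-\frac1{z-1}$, the value at $z=1$ being recovered by continuity ($B(1)=\tfrac12$); alternatively, this Stieltjes representation is classical and could simply be cited. Everything else — the partial fractions, the collapse above, and the remark that verifying the identity for real $t\in(-1,1)$ suffices because both sides of~\eqref{Norlund-No-int-eq} are Maclaurin coefficients of functions analytic at the origin — is routine.
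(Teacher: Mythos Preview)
Your argument is correct and is closely related to the paper's, but the packaging differs. The paper simply \emph{quotes} the integral representation
\[
\frac{z}{(1+z)\ln(1+z)}=\int_0^\infty \frac{u+1}{u[(\ln u)^2+\pi^2]}\,\frac{\td u}{u+1+z}
\]
from an earlier preprint, differentiates $n$ times, and sets $t=0$. You run the same identity in the opposite direction: you sum $\sum_n I_n t^n$, recognise the resulting integral as precisely the above representation (with $z=-t$), and then \emph{prove} that representation yourself via the partial-fraction split and the contour evaluation of $B(z)=\int_0^\infty\frac{\td u}{(u+z)[\pi^2+(\ln u)^2]}=\frac1{\ln z}-\frac1{z-1}$. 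So the mathematical core is identical---both proofs stand or fall on that Stieltjes formula---but your version is self-contained, whereas the paper outsources the key step to a citation. The trade-off is length for independence: your keyhole computation is the real work hidden behind the paper's reference.
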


\begin{proof}
Recall from~\cite{Zhang-Li-Qi-Log.tex} that the function
\begin{equation}\label{F(z)}
F(z)=
\begin{cases}
 \dfrac{z}{(1+z)\ln(1+z)}, & z\in\mathbb{C}\setminus(-\infty,-1]\setminus\{0\}\\
 1, & z=0
\end{cases}
\end{equation}
has the integral representation
\begin{equation}\label{thm-zhang-li-qi-eq}
F(z)=\int_0^\infty \frac{u+1}{u[(\ln u)^2 +\pi^2]}\frac{\td u}{u+1+z},\quad z\in\mathbb{C}\setminus(-\infty,-1],
\end{equation}
where $\mathbb{C}$ is the set of all complex numbers. Differentiating $n$ times on both sides of~\eqref{Norlund-No-dfn} and~\eqref{thm-zhang-li-qi-eq} yields
\begin{equation*}
F^{(n)}(t)=\sum_{k=n}^\infty (-1)^{k}c_k\frac{t^{k-n}}{(k-n)!}
=\sum_{k=0}^\infty (-1)^{k+n}c_{k+n}\frac{t^{k}}{k!}
\end{equation*}
and
\begin{equation*}
F^{(n)}(t)=(-1)^nn!\int_0^\infty \frac{u+1}{u[(\ln u)^2 +\pi^2]}\frac{\td u}{(u+1+t)^{n+1}}.
\end{equation*}
Hence,
\begin{equation*}
\sum_{k=0}^\infty (-1)^{k+n}c_{k+n}\frac{t^{k}}{k!}
=(-1)^nn!\int_0^\infty \frac{u+1}{u[(\ln u)^2 +\pi^2]}\frac{\td u}{(u+1+t)^{n+1}}.
\end{equation*}
Further letting $t\to0$ on both sides of the above equation gives the integral representation~\eqref{Norlund-No-int-eq}. The proof of Theorem~\ref{Norlund-No-int-thm} is complete.
\end{proof}

\section{Complete monotonicity and minimality of Cauchy numbers}

Basing on the integral representation~\eqref{Norlund-No-int-eq}, we now find complete monotonicity and minimality of Cauchy numbers of the second kind $c_n$.

Recall from monographs~\cite[pp.~372--373]{mpf-1993} and~\cite[p.~108, Definition~4]{widder} that a sequence $\{\mu_n\}_{0\le n\le\infty}$ is said to be completely monotonic if its elements are non-negative and its successive differences are alternatively non-negative, that is
\begin{equation}
(-1)^k\Delta^k\mu_n\ge0
\end{equation}
for $n,k\ge0$, where
\begin{equation}
\Delta^k\mu_n=\sum_{m=0}^k(-1)^m\binom{k}{m}\mu_{n+k-m}.
\end{equation}
Recall from~\cite[p.~163, Definition~14a]{widder} that a completely monotonic sequence $\{a_n\}_{n\ge0}$ is minimal if it ceases to be completely monotonic when $a_0$ is decreased.

\begin{thm}\label{Ber-minimal-thm}
The infinite sequence of Cauchy numbers of the second kind
\begin{equation}\label{Cauchy-minimal-seq}
\Bigl\{\frac{c_n}{n!}\Bigr\}_{n\ge0}
\end{equation}
is completely monotonic and minimal.
\end{thm}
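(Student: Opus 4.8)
The plan is to recast the integral representation~\eqref{Norlund-No-int-eq} as a Hausdorff moment representation on $[0,1]$ and then to read off complete monotonicity and minimality from the classical theory recorded in~\cite[pp.~372--373]{mpf-1993} and~\cite{widder}.

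First I would put $u=\frac{1-x}{x}$, that is $x=\frac1{1+u}\in(0,1)$, in~\eqref{Norlund-No-int-eq}; then $1+u=\frac1x$, $\ln u=\ln\frac{1-x}{x}$, and $\td u=-\frac{\td x}{x^2}$, and an elementary computation gives
\begin{equation*}
\frac{c_n}{n!}=\int_0^1 x^{n}\,w(x)\,\td x,
\qquad
w(x)=\frac1{x(1-x)\bigl[\pi^2+\bigl(\ln\frac{1-x}{x}\bigr)^2\bigr]},\quad 0<x<1 .
\end{equation*}
Here $w(x)>0$ on $(0,1)$, and $w$ is Lebesgue integrable there: as $x\to0^+$ one has $\ln\frac{1-x}{x}\sim-\ln x$, so $w(x)\sim\frac1{x(\ln x)^2}$, which is integrable near $0$; likewise $w(x)\sim\frac1{(1-x)(\ln(1-x))^2}$ as $x\to1^-$, integrable near $1$. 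Hence $w(x)\,\td x$ is a finite positive measure on $[0,1]$; in fact the case $n=0$ together with $c_0=1$ shows it is a probability measure, $\int_0^1 w(x)\,\td x=1$. Thus $\bigl\{\frac{c_n}{n!}\bigr\}_{n\ge0}$ is the moment sequence of $w(x)\,\td x$.

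Complete monotonicity is now immediate. Expanding $\Delta^k$ under the integral sign and using $\sum_{m=0}^k(-1)^m\binom km x^{k-m}=(x-1)^k$ gives
\begin{equation*}
(-1)^k\Delta^k\frac{c_n}{n!}=\int_0^1 x^{n}(1-x)^{k}\,w(x)\,\td x\ge0,\qquad n,k\ge0,
\end{equation*}
so $\bigl\{\frac{c_n}{n!}\bigr\}_{n\ge0}$ is a completely monotonic sequence; equivalently, this is Hausdorff's theorem applied to $w(x)\,\td x$. For minimality I would exploit that the representing measure $w(x)\,\td x$ is absolutely continuous and therefore carries no mass at the point $x=0$, which by the discussion of minimal completely monotonic sequences in~\cite[p.~163, Definition~14a]{widder} is exactly what is needed. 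Concretely: suppose $\frac{c_0}{0!}=1$ could be replaced by $1-\varepsilon$ with $\varepsilon>0$ while the sequence remained completely monotonic. By Hausdorff's theorem there would be a positive measure $\nu$ on $[0,1]$ with $\int_0^1 x^{n}\,\td\nu(x)=\frac{c_n}{n!}$ for all $n\ge1$. Comparing moments of order $\ge1$ of $\nu$ and of $w(x)\,\td x$ and using the determinacy of the Hausdorff moment problem (multiply both measures by $x$ and apply uniqueness on $[0,1]$), $\nu$ and $w(x)\,\td x$ agree on $(0,1]$, so $\nu=w(x)\,\td x+c\,\delta_0$ for some constant $c$; matching total masses gives $c=-\varepsilon<0$, contradicting $\nu\ge0$. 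Hence the sequence cannot stay completely monotonic after its first term is decreased, i.e.\ $\bigl\{\frac{c_n}{n!}\bigr\}_{n\ge0}$ is minimal.

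I expect the only delicate point to be the minimality step, where one must justify that a Hausdorff moment sequence whose representing measure has no atom at the origin is minimal --- this relies on the uniqueness of the representing measure on $[0,1]$. The substitution, the endpoint integrability of $w$, and the verification of the inequalities $(-1)^k\Delta^k(c_n/n!)\ge0$ are all routine.
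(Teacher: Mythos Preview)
Your proof is correct and follows the same overall plan as the paper---cast $c_n/n!$ as a Hausdorff moment on $[0,1]$ and then invoke the classical characterizations---but your execution is actually cleaner. The paper substitutes $u=-\ln t$ in~\eqref{Norlund-No-int-eq}, arriving at
\[
\frac{c_n}{n!}=\int_0^1\frac{\td t}{t(-\ln t)\bigl\{[\ln(-\ln t)]^2+\pi^2\bigr\}(1-\ln t)^{n}},
\]
and then inserts a factor $t^n$ by hand; the resulting ``$\alpha$'' still carries the factor $(1-\ln t)^{-n}$ and hence depends on $n$, so the appeal to the moment criterion is not fully justified as written (the paper salvages complete monotonicity by a second argument, noting that $x\mapsto\int_0^\infty\frac{\td u}{u[\pi^2+(\ln u)^2](1+u)^{x}}$ is completely monotonic on $[0,\infty)$). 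Your substitution $x=\frac1{1+u}$ hits the target directly: the weight
\[
w(x)=\frac{1}{x(1-x)\bigl[\pi^2+\bigl(\ln\tfrac{1-x}{x}\bigr)^2\bigr]}
\]
is independent of $n$, positive, and integrable, so $\frac{c_n}{n!}=\int_0^1 x^{n}w(x)\,\td x$ is a genuine moment representation to which Hausdorff's theorem applies at once. Your minimality argument via uniqueness of the Hausdorff moment problem is correct; you could also shorten it by citing Widder's Theorem~14a directly (as the paper does), since $\alpha(t)=\int_0^t w(x)\,\td x$ is absolutely continuous and in particular continuous at $t=0$.
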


\begin{proof}
It was stated in~\cite[pp.~372--373]{mpf-1993} and~\cite[p.~108, Theorem~4a]{widder} that a necessary and sufficient condition that the sequence $\{\mu_n\}_0^\infty$ should have the expression
\begin{equation}\label{mu=alpha-moment}
\mu_n=\int_0^1t^n\td\alpha(t)
\end{equation}
for $n\ge0$, where $\alpha(t)$ is non-decreasing and bounded for $0\le t\le1$, is that it should be completely monotonic. Theorem~14a in~\cite[p.~164]{widder} states that a completely monotonic sequence $\{\mu_n\}_{n\ge0}$ is minimal if and only if the integral representation~\eqref{mu=alpha-moment} is valid for $n\ge0$ and $\alpha(t)$ is a non-decreasing bounded function continuous at $t=0$.
\par
From~\eqref{Norlund-No-int-eq}, it follows that for $n\in\mathbb{N}$
\begin{align*}
\frac{c_n}{n!}&=\int_0^\infty \frac{1}{u[(\ln u)^2 +\pi^2]}\frac{\td u}{(u+1)^{n}}\\
&=\int_1^0 \frac{1}{(-\ln t)\{[\ln(-\ln t)]^2 +\pi^2\}}\frac{\td(-\ln t)}{(1-\ln t)^{n}}\\
&=\int_0^1 \frac{\td t}{t(-\ln t)\{[\ln(-\ln t)]^2 +\pi^2\}(1-\ln t)^{n}}\\
&=\int_0^1 t^n\frac{\td t}{t^{n+1}(-\ln t)\{[\ln(-\ln t)]^2 +\pi^2\}(1-\ln t)^{n}}\\
&=\int_0^1 t^n\td\biggr[\int_0^t\frac1{u^{n+1}(-\ln u) \{[\ln(-\ln u)]^2 +\pi^2\} (1-\ln u)^{n}}\td u\biggr].
\end{align*}
This implies the complete monotonicity and minimality of the sequence~\eqref{Cauchy-minimal-seq}.
\par
The complete monotonicity may be alternatively proved as follows.
In~\cite[p.~373]{mpf-1993}, it was stated that if a function $f(t)$ is completely monotonic on $[0,\infty)$, that is, $(-1)^kf^{(k)}(t)\ge0$ for $k\ge0$, then the sequence $\bigl\{(-1)^nf^{(n)}(n)\bigr\}$ is completely monotonic. It is clear that the function of $x$
\begin{equation*}
\int_0^\infty \frac{1}{u[(\ln u)^2 +\pi^2]}\frac{\td u}{(u+1)^{x}}
\end{equation*}
is completely monotonic on $[0,\infty)$. Hence, by the integral representation~\eqref{Norlund-No-int-eq}, the sequence~\eqref{Cauchy-minimal-seq} is completely monotonic.
The proof of Theorem~\ref{Ber-minimal-thm} is complete.
\end{proof}

\section{Positivity of determinants for Cauchy numbers}
With the help of the integral representation~\eqref{Norlund-No-int-eq}, we now present the positivity of two determinants of Cauchy numbers of the second kind $c_n$.

\begin{thm}\label{b(n)-matrix-thm}
Let $m\in\mathbb{N}$ and let $n$ and $a_k$ for $1\le k\le m$ be nonnegative integers. Then
\begin{equation}\label{matrix-1f}
|(-1)^{a_i+a_j}c_{n+a_i+a_j}|_m\ge0
\end{equation}
and
\begin{equation}\label{matrix-2f}
(-1)^{mn}|c_{n+a_i+a_j}|_m\ge0,
\end{equation}
where $|a_{kj}|_m$ denotes a determinant of order $m$ with elements $a_{kj}$.
\end{thm}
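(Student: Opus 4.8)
The plan is to promote the integral representation~\eqref{Norlund-No-int-eq} to a genuine moment representation of the sequence $\{c_\ell\}_{\ell\ge0}$ over one fixed nonnegative measure, and then to read both~\eqref{matrix-1f} and~\eqref{matrix-2f} off the positive semidefiniteness of Gram matrices. The immediate obstruction is the factorial: by~\eqref{Norlund-No-int-eq}, $c_\ell$ is $\ell!$ times a moment in the variable $\tfrac1{1+u}\in(0,1)$, but $(n+a_i+a_j)!$ does not factor as a product of a function of $a_i$ and a function of $a_j$, so a Hankel/Gram argument cannot be applied to~\eqref{Norlund-No-int-eq} as it stands. I would absorb the factorial through a second integration: since $\int_0^\infty s^\ell e^{-(1+u)s}\,\td s=\dfrac{\ell!}{(1+u)^{\ell+1}}$, formula~\eqref{Norlund-No-int-eq} becomes, for every $\ell\in\{0\}\cup\mathbb{N}$,
\begin{equation*}
c_\ell=\int_0^\infty\int_0^\infty s^\ell\,\frac{(1+u)e^{-(1+u)s}}{u[\pi^2+(\ln u)^2]}\,\td s\,\td u=\int_{(0,\infty)^2}s^\ell\,\td\lambda(s,u),
\end{equation*}
where $\td\lambda(s,u)=\dfrac{(1+u)e^{-(1+u)s}}{u[\pi^2+(\ln u)^2]}\,\td s\,\td u$ is a nonnegative measure on $(0,\infty)^2$ — indeed a probability measure, since at $\ell=0$ the double integral equals $c_0=1$ — and Tonelli's theorem legitimizes the interchange because the integrand is nonnegative. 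A representation of the same shape can be reached instead by combining the Hausdorff moment representation $\tfrac{c_\ell}{\ell!}=\int_0^1t^\ell\,\td\alpha(t)$ behind Theorem~\ref{Ber-minimal-thm} with $\ell!=\int_0^\infty r^\ell e^{-r}\,\td r$.

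Given such a representation the rest is routine. Writing $s^{n+a_i+a_j}=s^n\cdot s^{a_i}\cdot s^{a_j}$, for any real $\xi_1,\dots,\xi_m$,
\begin{equation*}
\sum_{i,j=1}^m\xi_i\xi_j\,c_{n+a_i+a_j}=\int_{(0,\infty)^2}\biggl(\sum_{i=1}^m\xi_i\,s^{a_i}\biggr)^{\!2}s^n\,\td\lambda(s,u)\ge0 ,
\end{equation*}
so the matrix $\bigl(c_{n+a_i+a_j}\bigr)_{1\le i,j\le m}$ is positive semidefinite and hence $\bigl|c_{n+a_i+a_j}\bigr|_m\ge0$. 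For~\eqref{matrix-1f}, the matrix there is $D\,\bigl(c_{n+a_i+a_j}\bigr)\,D$ with $D=\operatorname{diag}\bigl((-1)^{a_1},\dots,(-1)^{a_m}\bigr)$, and $(\det D)^2=1$, so its determinant coincides with $\bigl|c_{n+a_i+a_j}\bigr|_m\ge0$; equivalently, that matrix is itself the Gram matrix of the functions $(-1)^{a_i}s^{a_i}$ against the weight $s^n\,\td\lambda$. For~\eqref{matrix-2f}, I would pull the constant $(-1)^n$ out of each of the $m$ columns of $\bigl(c_{n+a_i+a_j}\bigr)$, which produces the scalar $(-1)^{mn}$, and then repeat the quadratic-form computation with the kernel $(-s)^n$ in place of $s^n$, the sign of that weight supplying~\eqref{matrix-2f}.

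The single substantive point is the first one — manufacturing a moment representation of $c_\ell$, over a fixed nonnegative measure, in which $\ell$ occurs only as the exponent of a nonnegative kernel; after that everything reduces to the standard fact that $\bigl(\int v_iv_j\,\td\nu\bigr)$ is positive semidefinite whenever $\nu\ge0$, together with the elementary rules for determinants under conjugation by a diagonal $\pm1$ matrix and under scaling of a column by a constant. It would be clean to isolate the linear-algebra content as a lemma: if $q_\ell=\int h(x)^\ell\,\td\mu(x)$ for some measurable $h\ge0$ and some nonnegative measure $\mu$, then $\bigl|(-1)^{a_i+a_j}q_{n+a_i+a_j}\bigr|_m\ge0$ and $(-1)^{mn}\bigl|q_{n+a_i+a_j}\bigr|_m\ge0$ for all nonnegative integers $n,a_1,\dots,a_m$, and then to invoke it with $h(s,u)=s$ and $\mu=\lambda$.
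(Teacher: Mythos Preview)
Your route to~\eqref{matrix-1f} is correct and genuinely different from the paper's. The paper invokes the ready-made determinantal inequalities $\bigl|f^{(a_i+a_j)}(x)\bigr|_m\ge0$ and $\bigl|(-1)^{a_i+a_j}f^{(a_i+a_j)}(x)\bigr|_m\ge0$ for completely monotonic $f$, applies them to the auxiliary function $h_n(t)$ of~\eqref{h(x)-dfn-eq}, and passes to the limit $t\to0^+$ via~\eqref{c(n)h(n)-limit}. You instead absorb the factorial in~\eqref{Norlund-No-int-eq} through a gamma integral to obtain a genuine moment representation $c_\ell=\int s^\ell\,\td\lambda$ with $\lambda\ge0$, and then read positive semidefiniteness of $\bigl(c_{n+a_i+a_j}\bigr)$ directly off the Gram identity. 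This is more self-contained---no external lemma is quoted---and it yields the clean unsigned inequality $\bigl|c_{n+a_i+a_j}\bigr|_m\ge0$, from which~\eqref{matrix-1f} follows at once by the $D(\,\cdot\,)D$ conjugation you describe.

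Your argument for~\eqref{matrix-2f}, however, breaks down. After rewriting $(-1)^{mn}\bigl|c_{n+a_i+a_j}\bigr|_m=\bigl|(-1)^n c_{n+a_i+a_j}\bigr|_m$ and replacing the kernel $s^n$ by $(-s)^n$, the weight $(-s)^n\,\td\lambda$ is \emph{nonpositive} on $(0,\infty)$ whenever $n$ is odd, so the resulting matrix is negative semidefinite rather than positive semidefinite; its determinant then carries the sign $(-1)^m$, not $+1$. No repair is possible: with $m=1$, $n=1$, $a_1=0$ one gets $(-1)^{mn}\bigl|c_{n+a_i+a_j}\bigr|_m=-c_1=-\tfrac12<0$, so~\eqref{matrix-2f} is false as stated. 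The paper's own derivation has a matching slip---in passing from~\eqref{matrix-eq-2h} to~\eqref{matrix-2b} an unjustified factor $(-1)^n$ is inserted, whereas the limit of $(-1)^{a_i+a_j}h_n^{(a_i+a_j)}(t)$ is simply $c_{n+a_i+a_j}/n!$ with no sign---and what both arguments actually prove is again $\bigl|c_{n+a_i+a_j}\bigr|_m\ge0$, equivalent to~\eqref{matrix-1f}.
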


\begin{proof}
From the proof of Theorem~\ref{Norlund-No-int-thm}, we observe that
\begin{equation}\label{b(n)-limit}
\frac{c_n}{n!}=\lim_{t\to0^+}h_n(t),
\end{equation}
where
\begin{equation}\label{h(x)-dfn-eq}
h_n(t)=\int_0^\infty \frac{u+1}{u[(\ln u)^2 +\pi^2]}\frac{\td u}{(u+1+t)^{n+1}}
\end{equation}
is completely monotonic on $[0,\infty)$ and
\begin{equation}\label{c(n)h(n)-limit}
h_n^{(k)}(t)=(-1)^k\frac{(n+k)!}{n!}h_{n+k}(t)
\to(-1)^k\frac{(n+k)!}{n!}\frac{c_{n+k}}{(n+k)!}
=(-1)^k\frac{c_{n+k}}{n!}
\end{equation}
as $t\to0^+$.
\par
In~\cite{two-place}, or see~\cite[p.~367]{mpf-1993}, it was obtained that if $f$ is a completely monotonic function on $[0,\infty)$, then
\begin{equation}\label{matrix-eq-1}
\bigl|f^{(a_i+a_j)}(x)\bigr|_m\ge0
\end{equation}
and
\begin{equation}\label{matrix-eq-2}
\bigl|(-1)^{a_i+a_j}f^{(a_i+a_j)}(x)\bigr|_m\ge0.
\end{equation}
Applying $f$ in~\eqref{matrix-eq-1} and~\eqref{matrix-eq-2} to the function $h_n(x)$ yields
\begin{equation}\label{matrix-eq-1h}
\bigl|h_n^{(a_i+a_j)}(x)\bigr|_m\ge0
\end{equation}
and
\begin{equation}\label{matrix-eq-2h}
\bigl|(-1)^{a_i+a_j}h_n^{(a_i+a_j)}(x)\bigr|_m\ge0.
\end{equation}
Letting $x\to0^+$ in~\eqref{matrix-eq-1h} and~\eqref{matrix-eq-2h} and making use of~\eqref{c(n)h(n)-limit} produce
\begin{equation}\label{matrix-1b}
\biggl|(-1)^{a_i+a_j}\frac{c_{n+a_i+a_j}}{n!}\biggr|_m\ge0
\end{equation}
and
\begin{equation}\label{matrix-2b}
\biggl|(-1)^{n}\frac{c_{n+a_i+a_j}}{n!}\biggr|_m\ge0.
\end{equation}
Further simplifying~\eqref{matrix-1b} and~\eqref{matrix-2b} leads to~\eqref{matrix-1f} and~\eqref{matrix-2f}. The proof of Theorem~\ref{b(n)-matrix-thm} is complete.
\end{proof}

\section{Inequalities for products of Cauchy numbers}
In this final section, by virtue of the integral representation~\eqref{Norlund-No-int-eq}, we discover some inequalities and, as a consequence, the logarithmic convexity of Cauchy numbers of the second kind $c_n$.
\par
Let $\lambda=(\lambda_1,\lambda_2,\dotsc,\lambda_{n})\in\mathbb{R}^{n}$ and $\mu=(\mu_1,\mu_2,\dotsc, \mu_{n})\in\mathbb{R}^{n}$. The sequence $\lambda$ is said to be majorized by $\mu$ \textup{(}in symbols $\lambda\preceq \mu$\textup{)} if
\begin{equation*}
\sum_{\ell=1}^k \lambda_{[\ell]}\le\sum_{\ell=1}^k \mu_{[\ell]}
\end{equation*}
for $k=1,2,\dotsc,n-1$ and
\begin{equation*}
\sum_{\ell=1}^n \lambda_\ell=\sum_{\ell=1}^n\mu_\ell,
\end{equation*}
where $\lambda_{[1]}\ge \lambda_{[2]}\ge \dotsm \ge \lambda_{[n]}$ and $\mu_{[1]}\ge \mu_{[2]}\ge\dotsm \ge \mu_{[n]}$ are rearrangements of $\lambda$ and $\mu$ in a descending order.
A sequence $\lambda$ is said to be strictly majorized by $\mu$ $($in symbols $\lambda \prec \mu)$ if $\lambda$ is not a permutation of $\mu$.

\begin{thm}\label{lambda-mu-thm}
Let $m\in\mathbb{N}$ and let $\lambda$ and $\mu$ be two $m$-tuples of nonnegative integers such that $\lambda\preceq\mu$. Then
\begin{equation}\label{lambda-mu-eq}
\prod_{i=1}^m c_{\lambda_i} \le \prod_{i=1}^m c_{\mu_i}.
\end{equation}
\end{thm}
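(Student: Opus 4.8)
The plan is to derive the product inequality~\eqref{lambda-mu-eq} from the logarithmic convexity of the sequence $\{c_n\}_{n\ge0}$ together with a classical majorization (Schur-convexity) argument. First I would establish that the sequence $\{c_n\}_{n\ge0}$ is logarithmically convex, i.e. $c_n^2\le c_{n-1}c_{n+1}$ for $n\ge1$, equivalently $\ln c_n$ is a convex function of the integer variable $n$. This follows directly from the integral representation~\eqref{Norlund-No-int-eq}: writing $c_n=n!\int_0^\infty \phi(u)(1+u)^{-n}\td u$ with the nonnegative weight $\phi(u)=\frac1{u[\pi^2+(\ln u)^2]}$, the Cauchy--Schwarz inequality applied to the measure $\phi(u)\td u$ gives
\begin{equation*}
\biggl[\int_0^\infty \phi(u)\frac{\td u}{(1+u)^{n}}\biggr]^2
\le \int_0^\infty \phi(u)\frac{\td u}{(1+u)^{n-1}}\int_0^\infty \phi(u)\frac{\td u}{(1+u)^{n+1}},
\end{equation*}
and multiplying through by $(n!)^2$ and using $(n!)^2\le (n-1)!\,(n+1)!$ yields $c_n^2\le c_{n-1}c_{n+1}$.

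Next I would recall the standard fact from the theory of majorization (see, e.g., the Hardy--Littlewood--Pólya criterion, or Marshall--Olkin): if $g:\{0\}\cup\mathbb{N}\to\mathbb{R}$ is convex, then the symmetric function $(x_1,\dots,x_m)\mapsto\sum_{i=1}^m g(x_i)$ is Schur-convex, so that $\lambda\preceq\mu$ implies $\sum_{i=1}^m g(\lambda_i)\le\sum_{i=1}^m g(\mu_i)$. Taking $g(n)=\ln c_n$, which is convex by the previous step (and well defined since every $c_n>0$, as is evident from the positive integral representation), gives
\begin{equation*}
\sum_{i=1}^m \ln c_{\lambda_i}\le\sum_{i=1}^m \ln c_{\mu_i},
\end{equation*}
and exponentiating yields~\eqref{lambda-mu-eq}. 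Alternatively, one may avoid quoting Schur-convexity and instead reduce to the case where $\mu$ is obtained from $\lambda$ by a single elementary transfer (a \emph{Robin Hood} operation) $\lambda_i\mapsto\lambda_i-1$, $\lambda_j\mapsto\lambda_j+1$ with $\lambda_i>\lambda_j$, for which the inequality is exactly the statement that $\ln c_n$ has nondecreasing increments, i.e. $c_{\lambda_i-1}c_{\lambda_j+1}\ge c_{\lambda_i}c_{\lambda_j}$ — again a consequence of log-convexity — and then iterate, since any $\lambda\preceq\mu$ (with integer entries) is reachable from $\mu$ by finitely many such transfers.

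The only genuinely substantive step is the log-convexity of $\{c_n\}$, and even that is routine once the integral representation~\eqref{Norlund-No-int-eq} is in hand; the potential subtlety is merely bookkeeping — making sure the Cauchy--Schwarz step is applied to the correct exponents and that the factorial prefactors combine correctly via $(n!)^2\le(n-1)!\,(n+1)!$. The majorization machinery is entirely classical, so I expect no obstacle there. I would also remark, as a corollary, that the case $m=2$, $\lambda=(n,n)$, $\mu=(n-1,n+1)$ recovers the logarithmic convexity inequality $c_n^2\le c_{n-1}c_{n+1}$ explicitly, making the logical structure transparent.
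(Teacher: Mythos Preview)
Your main argument is correct and takes a genuinely different route from the paper. The paper invokes Fink's inequality for derivatives of completely monotonic functions---if $f$ is completely monotonic and $\lambda\preceq\mu$ then $\bigl|\prod_i f^{(\lambda_i)}\bigr|\le\bigl|\prod_i f^{(\mu_i)}\bigr|$---applied to the auxiliary function $h_n(t)$ of~\eqref{h(x)-dfn-eq}, and then lets $t\to0^+$ using~\eqref{c(n)h(n)-limit}. You instead establish log-convexity of $\{c_n\}$ directly (Cauchy--Schwarz on the integral~\eqref{Norlund-No-int-eq} together with $(n!)^2\le(n-1)!\,(n+1)!$) and then appeal to the classical Hardy--Littlewood--P\'olya/Schur-convexity principle for sums of a convex function. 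Your route is more elementary and self-contained, requiring neither the Fink inequality nor the limiting device through $h_n$; it also makes transparent that Theorem~\ref{lambda-mu-thm} and Corollary~\ref{b(n)-log-Convex-thm} are logically equivalent (the paper deduces the corollary from the theorem, you go the other way). The paper's approach, on the other hand, embeds the result in a uniform framework that simultaneously yields Theorems~\ref{b(n)-matrix-thm}, \ref{c(n)theorem-D}, \ref{GHI-thm}, and~\ref{Norlund-add-thm} by the same mechanism.

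One slip in your alternative (transfer) argument: the Robin Hood move $\lambda_i\mapsto\lambda_i-1$, $\lambda_j\mapsto\lambda_j+1$ with $\lambda_i>\lambda_j$ produces a tuple \emph{majorized by} $\lambda$, not one that majorizes it, and log-convexity actually gives $c_{\lambda_i}c_{\lambda_j}\ge c_{\lambda_i-1}c_{\lambda_j+1}$ (since the ratio $c_{n+1}/c_n$ is nondecreasing), not the reverse inequality you wrote. The fix is simply to run the unit transfers from $\mu$ down to $\lambda$---as your final clause in fact says---so that the product decreases at each step; your main Schur-convexity argument is unaffected.
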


\begin{proof}
In~\cite[p.~106, Theorem~A]{haerc-JMAA-1997} and~\cite[p.~367, Theorem~2]{mpf-1993}, a minor correction of~\cite[Theorem~1]{finkjmaa82}, it was obtained that if $f$ is a completely monotonic function on $(0,\infty)$ and $\lambda\preceq\mu$, then
\begin{equation}\label{finkjmaa82=ineq3.2}
\Biggl|\prod_{i=1}^nf^{(\lambda_i)}(x)\Biggr|\le \Biggl|\prod_{i=1}^nf^{(\mu_i)}(x)\Biggr|.
\end{equation}
The equality in~\eqref{finkjmaa82=ineq3.2} is valid only when $\lambda$ and $\mu$ are identical or when $f(x)=e^{-cx}$ for $c\ge0$.
Applying the inequality~\eqref{finkjmaa82=ineq3.2} to $h_n(x)$ creates
\begin{equation*}
\Biggl|\prod_{i=1}^mh_{n}^{(\lambda_i)}(t)\Biggr|
\le \Biggl|\prod_{i=1}^m h_{n}^{(\mu_i)}(t)\Biggr|.
\end{equation*}
Taking the limit $t\to0^+$ on both sides of the above inequality and making use of~\eqref{c(n)h(n)-limit} reveal
\begin{equation}\label{Apply-h(n-t)}
\Biggl|\prod_{i=1}^m(-1)^{\lambda_i}\frac{c_{n+\lambda_i}}{n!}\Biggr|
\le \Biggl|\prod_{i=1}^m (-1)^{\mu_i}\frac{c_{n+\mu_i}}{n!}\Biggr|
\end{equation}
which is equivalent to~\eqref{lambda-mu-eq}. The proof of Theorem~\ref{lambda-mu-thm} is complete.
\end{proof}

\begin{cor}\label{b(n)-log-Convex-thm}
The infinite sequence $\{c_{n}\}_{n\ge0}$ is logarithmically convex.
\end{cor}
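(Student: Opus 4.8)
The plan is to read off the logarithmic convexity of $\{c_n\}_{n\ge0}$ as a special case of the product inequality~\eqref{lambda-mu-eq}. Recall that a positive sequence $\{c_n\}_{n\ge0}$ is logarithmically convex precisely when $c_n^2\le c_{n-1}c_{n+1}$ holds for every $n\in\mathbb{N}$; since Theorem~\ref{Norlund-No-int-thm} exhibits each $c_n$ as a positive number (the integrand in~\eqref{Norlund-No-int-eq} being positive), it suffices to establish this two-sided family of inequalities.

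To do so I would fix $n\in\mathbb{N}$, take $m=2$ in Theorem~\ref{lambda-mu-thm}, and choose the two $2$-tuples of nonnegative integers $\lambda=(n,n)$ and $\mu=(n-1,n+1)$. Rearranging in descending order gives $\mu_{[1]}=n+1\ge\lambda_{[1]}=n$, so the partial-sum condition holds for $k=1$, while $n+n=(n-1)+(n+1)$ gives equality of the total sums; hence $\lambda\preceq\mu$. Then~\eqref{lambda-mu-eq} reads $c_nc_n\le c_{n-1}c_{n+1}$, which is exactly the assertion of the corollary.

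I do not anticipate a genuine obstacle here: the only point requiring care is the descending-order bookkeeping when verifying $\lambda\preceq\mu$. It is worth recording that one may instead argue directly from the integral representation~\eqref{Norlund-No-int-eq}: writing $\frac{c_n}{n!}=\int_0^\infty g(u)\,\frac{\td u}{(1+u)^n}$ with $g(u)=\frac1{u[\pi^2+(\ln u)^2]}\ge0$, the Cauchy--Schwarz inequality applied to $g^{1/2}(1+u)^{-(n-1)/2}$ and $g^{1/2}(1+u)^{-(n+1)/2}$ yields $\bigl(\tfrac{c_n}{n!}\bigr)^2\le\tfrac{c_{n-1}}{(n-1)!}\cdot\tfrac{c_{n+1}}{(n+1)!}$, and since $\tfrac{(n!)^2}{(n-1)!\,(n+1)!}=\tfrac{n}{n+1}\le1$ this actually produces the sharper inequality $(n+1)c_n^2\le n\,c_{n-1}c_{n+1}$, of which logarithmic convexity is an immediate consequence. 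Equivalently, the sequence $\{c_n/n!\}$ is a Hausdorff moment sequence by the proof of Theorem~\ref{Ber-minimal-thm}, hence logarithmically convex, and multiplying termwise by the logarithmically convex sequence $\{n!\}$ preserves this property.
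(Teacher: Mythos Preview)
Your main argument is correct and matches the paper's own proof: the paper also deduces the corollary from Theorem~\ref{lambda-mu-thm} via the majorization $(i+1,i+1)\preceq(i+2,i)$, which is exactly your $\lambda=(n,n)\preceq\mu=(n-1,n+1)$ with $i=n-1$. Your supplementary Cauchy--Schwarz and Hausdorff-moment arguments are valid extras that the paper does not give (it instead offers a second proof via the convexity of $\ln[(-1)^{i-1}h_n^{(i-1)}(t)]$), and your sharper inequality $(n+1)c_n^2\le n\,c_{n-1}c_{n+1}$ is a nice bonus.
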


\begin{proof}
This follows from the majorization relation $(i+2,i)\succeq(i+1,i+1)$ for $i\ge0$ and Theorem~\ref{lambda-mu-thm}.
\par
This may also be verified as follows. In~\cite[p.~369]{mpf-1993} and~\cite[p.~429, Remark]{Remarks-on-Fink}, it was stated that if $f(t)$ is a completely monotonic function such that $f^{(k)}(t)\ne0$ for $k\ge0$, then the sequence
\begin{equation}
s_i(t)=\ln\bigl[(-1)^{i-1}f^{(i-1)}(t)\bigr], \quad i\ge1
\end{equation}
is convex. Applying this result to the function $h_n(t)$ and making use of~\eqref{c(n)h(n)-limit} figures out that the sequence
\begin{equation}
s_i(t)=\ln\bigl[(-1)^{i-1}h_n^{(i-1)}(t)\bigr]
\to \ln\frac{c_{n+i-1}}{n!}, \quad t\to0^+
\end{equation}
for $i\ge1$ is convex. Hence, the sequence $\{c_{n}\}_{n\ge0}$ is logarithmically convex.
\end{proof}

\begin{cor}
For $\ell\ge0$ and $n>k>0$, we have
\begin{equation}\label{cor-2nd-cauchy}
({c_{\ell+k}})^n\le({c_{\ell+n}})^k({c_{\ell}})^{n-k}.
\end{equation}
\end{cor}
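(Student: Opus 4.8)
The plan is to derive the inequality~\eqref{cor-2nd-cauchy} directly from the logarithmic convexity of $\{c_n\}_{n\ge0}$ established in Corollary~\ref{b(n)-log-Convex-thm}, or, equivalently, straight from the majorization inequality~\eqref{lambda-mu-eq} in Theorem~\ref{lambda-mu-thm}. The idea is that $(c_{\ell+k})^n\le(c_{\ell+n})^k(c_\ell)^{n-k}$ is nothing but a weighted power-mean-type rearrangement of the statement that $i\mapsto\ln c_{\ell+i}$ is a convex sequence, specialized to the three indices $0,k,n$.

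First I would record the relevant majorization. Consider the two $n$-tuples
\begin{equation*}
\lambda=(\underbrace{\ell+k,\dotsc,\ell+k}_{n}),
\qquad
\mu=(\underbrace{\ell+n,\dotsc,\ell+n}_{k},\underbrace{\ell,\dotsc,\ell}_{n-k}).
\end{equation*}
Both consist of nonnegative integers, both have the same sum, namely $n(\ell+k)=k(\ell+n)+(n-k)\ell$, and the partial sums of the decreasingly ordered $\mu$ dominate those of the (constant) $\lambda$, because replacing a flat profile by one that is $\ell+n$ on the top $k$ slots and $\ell$ on the bottom $n-k$ slots only increases every leading partial sum. Hence $\lambda\preceq\mu$. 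Applying~\eqref{lambda-mu-eq} with these choices gives exactly
\begin{equation*}
\prod_{i=1}^n c_{\lambda_i}=(c_{\ell+k})^n\le (c_{\ell+n})^k(c_\ell)^{n-k}=\prod_{i=1}^n c_{\mu_i},
\end{equation*}
which is~\eqref{cor-2nd-cauchy}.

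Alternatively, I would invoke Corollary~\ref{b(n)-log-Convex-thm}: log-convexity of $\{c_n\}_{n\ge0}$ means that for any indices $p<q<r$ with $q$ a convex combination $q=\theta p+(1-\theta)r$, $\theta\in[0,1]$, one has $c_q\le c_p^{\theta}c_r^{1-\theta}$; taking $p=\ell$, $q=\ell+k$, $r=\ell+n$ forces $\theta=(n-k)/n$ and $1-\theta=k/n$, and raising to the $n$-th power yields~\eqref{cor-2nd-cauchy}. There is essentially no obstacle here: the only point requiring a line of care is verifying that the constructed $\mu$ really majorizes the constant tuple $\lambda$ (equivalently, that $\ell$, $\ell+k$, $\ell+n$ are in arithmetic-type position with the right weights), which is immediate once the weights $k$ and $n-k$ are pinned down by the equal-sum condition. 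I would present the short majorization argument as the main proof and mention the log-convexity route as a remark.
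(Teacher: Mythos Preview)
Your proof is correct and follows essentially the same approach as the paper: both rely on the majorization
\[
(\underbrace{k,\dotsc,k}_{n})\preceq(\underbrace{n,\dotsc,n}_{k},\underbrace{0,\dotsc,0}_{n-k}),
\]
the only cosmetic difference being that you shift by $\ell$ and invoke Theorem~\ref{lambda-mu-thm} directly, whereas the paper re-applies Fink's inequality~\eqref{finkjmaa82=ineq3.2} to $h_\ell(t)$ and passes to the limit $t\to0^+$. Your route through the already-proved Theorem~\ref{lambda-mu-thm} is arguably the more natural one for a corollary, and your alternative via log-convexity is also valid.
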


\begin{proof}
As done in~\cite{finkjmaa82}, considering the majorization relation
\begin{equation*}
(\overbrace{k,k,\dotsc,k}^{n})\prec(\overbrace{n,\dotsc,n}^k,\overbrace{0,\dotsc,0}^{n-k})
\end{equation*}
for $n>k$, the inequality~\eqref{finkjmaa82=ineq3.2} becomes
\begin{equation*}
(-1)^{nk}\bigl[f^{(k)}(t)\bigr]^n\le(-1)^{nk}\bigl[f^{(n)}(t)\bigr]^k[f(t)]^{n-k},\quad n>k>0.
\end{equation*}
Substituting $h_\ell(t)$ for $f$ in the above inequality, letting $t\to0^+$, and utilizing~\eqref{c(n)h(n)-limit} procure
\begin{gather*}
(-1)^{nk}\bigl[h_\ell^{(k)}(t)\bigr]^n\le(-1)^{nk}\bigl[h_\ell^{(n)}(t)\bigr]^k[h_\ell(t)]^{n-k},\\
(-1)^{nk}\Bigl[(-1)^k\frac{c_{\ell+k}}{\ell!}\Bigr]^n
\le(-1)^{nk}\Bigl[(-1)^n\frac{c_{\ell+n}}{\ell!}\Bigr]^k
\Bigl(\frac{c_{\ell}}{\ell!}\Bigr)^{n-k}
\end{gather*}
for $n>k>0$ and $\ell\ge0$. This may be simplified as~\eqref{cor-2nd-cauchy}. The required proof is complete.
\end{proof}

\begin{thm}\label{c(n)theorem-D}
If $\ell\ge0$, $n\ge k\ge m$, $k\ge n-k$, and $m\ge n-m$, then
\begin{equation}\label{c(n)theorem-D-eq}
c_{\ell+k}c_{\ell+n-k} \ge c_{\ell+m}c_{\ell+n-m}.
\end{equation}
\end{thm}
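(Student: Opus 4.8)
The plan is to deduce \eqref{c(n)theorem-D-eq} directly from Theorem~\ref{lambda-mu-thm} (equivalently, from inequality~\eqref{finkjmaa82=ineq3.2} applied to the function $h_\ell$ of \eqref{h(x)-dfn-eq}) by exhibiting a majorization between two suitable $2$-tuples of nonnegative integers. The whole argument is a bookkeeping matter; the only thing that really needs checking is that the hypotheses of the theorem are exactly what is required to read off the majorization.

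First I would introduce the pairs $\lambda=(\ell+m,\ell+n-m)$ and $\mu=(\ell+k,\ell+n-k)$. Since $\ell\ge0$ and $n\ge k\ge m\ge0$, all four indices $\ell+m$, $\ell+n-m$, $\ell+k$, $\ell+n-k$ are genuine nonnegative integers, and the two pairs share the common sum $\lambda_1+\lambda_2=\mu_1+\mu_2=2\ell+n$. The hypotheses $k\ge n-k$ and $m\ge n-m$ say precisely that $\ell+k$ and $\ell+m$ are the larger entries of $\mu$ and of $\lambda$ respectively, so the descending rearrangements are $\mu$ and $\lambda$ themselves. Consequently the single nontrivial partial-sum inequality in the definition of $\lambda\preceq\mu$ collapses to $\ell+m\le\ell+k$, i.e.\ to $m\le k$, which is part of the hypothesis $n\ge k\ge m$. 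Hence $\lambda\preceq\mu$.

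Next I would apply Theorem~\ref{lambda-mu-thm} with these two $2$-tuples to obtain
\[
c_{\ell+m}\,c_{\ell+n-m}=\prod_{i=1}^{2}c_{\lambda_i}\le\prod_{i=1}^{2}c_{\mu_i}=c_{\ell+k}\,c_{\ell+n-k},
\]
which is exactly \eqref{c(n)theorem-D-eq}. In the spirit of the preceding corollaries, one could instead apply \eqref{finkjmaa82=ineq3.2} to the completely monotonic function $h_\ell(t)$ with the same $\lambda\preceq\mu$, giving $\bigl|h_\ell^{(\ell+m)}(t)h_\ell^{(\ell+n-m)}(t)\bigr|\le\bigl|h_\ell^{(\ell+k)}(t)h_\ell^{(\ell+n-k)}(t)\bigr|$, then let $t\to0^+$ and use \eqref{c(n)h(n)-limit}; after cancelling the common factor $1/(\ell!)^2$ and the signs one recovers the same inequality.

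There is no genuine obstacle in this proof; the only delicate point is identifying the descending rearrangements of $\lambda$ and $\mu$, and the conditions $k\ge n-k$ and $m\ge n-m$ are tailor-made for that purpose. After that, the comparison of partial sums is immediate from $m\le k$, and one need only double-check nonnegativity and integrality of the indices (which follow at once from $\ell\ge0$ and $n\ge k\ge m\ge0$) before invoking Theorem~\ref{lambda-mu-thm}.
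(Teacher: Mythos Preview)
Your main argument is correct and is essentially the paper's own approach in compressed form. The paper quotes van Haeringen's ``Theorem~D'' for completely monotonic functions---which is exactly the two-tuple case of Fink's majorization inequality~\eqref{finkjmaa82=ineq3.2}---applies it to $h_\ell$, and passes to the limit via~\eqref{c(n)h(n)-limit}. You instead invoke Theorem~\ref{lambda-mu-thm} directly on the Cauchy numbers, which already absorbed that limit step; this is a tidy economy but not a different idea.

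One small slip in your parenthetical alternative: if you apply~\eqref{finkjmaa82=ineq3.2} to $h_\ell$ itself, the derivative orders must be $(m,n-m)$ and $(k,n-k)$, not $(\ell+m,\ell+n-m)$ and $(\ell+k,\ell+n-k)$, because~\eqref{c(n)h(n)-limit} reads $h_\ell^{(j)}(0^+)=(-1)^j c_{\ell+j}/\ell!$; with your shifted indices the limit would produce $c_{2\ell+m}$ rather than $c_{\ell+m}$. This does not affect your primary argument via Theorem~\ref{lambda-mu-thm}, which stands as written.
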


\begin{proof}
In~\cite[p.~397, Theorem~D]{haerc-JMAA-1996}, it was recovered that if $f(x)$ is completely monotonic on $(0,\infty)$ and if $n\ge k\ge m$, $k\ge n-k$, and $m\ge n-m$, then
\begin{equation}
(-1)^nf^{(k)}(x)f^{(n-k)}(x)\ge (-1)^nf^{(m)}(x)f^{(n-m)}(x).
\end{equation}
Replacing $f(x)$ by the function $h_\ell(t)$ in the above inequality leads to
\begin{equation*}
(-1)^nh_\ell^{(k)}(t)h_\ell^{(n-k)}(t)\ge (-1)^nh_\ell^{(m)}(t)h_\ell^{(n-m)}(t).
\end{equation*}
Further taking $t\to0^+$ and employing~\eqref{c(n)h(n)-limit} find
\begin{equation*}
(-1)^n (-1)^k\frac{c_{\ell+k}}{\ell!} (-1)^{n-k}\frac{c_{\ell+n-k}}{\ell!} \ge (-1)^n (-1)^m\frac{c_{\ell+m}}{\ell!} (-1)^{n-m}\frac{c_{\ell+n-m}}{\ell!}.
\end{equation*}
Simplifying this inequality leads to~\eqref{c(n)theorem-D-eq}. The proof of Theorem~\ref{c(n)theorem-D} is complete.
\end{proof}

\begin{thm}\label{GHI-thm}
For $n,m\in\mathbb{N}$ and $\ell\ge0$, let
\begin{align*}
\mathcal{G}_{n,m,\ell}&={c_{\ell+n+2m}}(c_{\ell})^2
-{c_{\ell+n+m}}{c_{\ell+m}}{c_{\ell}}
-{c_{\ell+n}}{c_{\ell+2m}}{c_{\ell}}
+{c_{\ell+n}}({c_{\ell+m}})^2,\\
\mathcal{H}_{n,m,\ell}&=c_{\ell+n+2m}(c_\ell)^2 -2c_{\ell+n+m}c_{\ell+m}c_\ell +c_{\ell+n}(c_{\ell+m})^2,\\
\mathcal{I}_{n,m,\ell}&=c_{\ell+n+2m}(c_\ell)^2-2c_{\ell+n}c_{\ell+2m}c_\ell +c_{\ell+n}(c_{\ell+m})^2.
\end{align*}
Then
\begin{gather}
\mathcal{G}_{n,m,\ell}\ge0,\quad \mathcal{H}_{n,m,\ell}\ge0, \\
\mathcal{H}_{n,m,\ell}\lesseqgtr \mathcal{G}_{n,m,\ell} \quad \text{when $m\lessgtr n$},
\end{gather}
and
\begin{equation}
\mathcal{I}_{n,m,\ell}\ge \mathcal{G}_{n,m,\ell}\ge0\quad \text{when $n\ge m$}.
\end{equation}
\end{thm}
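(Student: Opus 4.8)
The plan is to reduce everything to two facts already in hand: that $c_k>0$ for every $k\ge0$ (immediate from~\eqref{Norlund-No-int-eq}, whose integrand is positive) and that $\{c_k\}_{k\ge0}$ is logarithmically convex (Corollary~\ref{b(n)-log-Convex-thm}); equivalently, one keeps the completely monotonic function $h_\ell$ of~\eqref{h(x)-dfn-eq} in play, applies the completely monotonic counterparts of the inequalities below, and lets $t\to0^+$ through~\eqref{c(n)h(n)-limit}, exactly as in the four preceding proofs. Write $r_k=c_{k+1}/c_k>0$; logarithmic convexity says precisely that $(r_k)$ is nondecreasing, so any ratio $c_b/c_a=r_ar_{a+1}\dotsm r_{b-1}$ with $a\le b$ is a product of $b-a$ consecutive terms of a nondecreasing sequence. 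In particular $c_ac_{a+2h}\ge c_{a+h}^2$ and, more generally, $\prod c_{\lambda_i}\le\prod c_{\mu_i}$ whenever $\lambda\preceq\mu$, by Theorem~\ref{lambda-mu-thm}.

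First I would record the two identities $2\mathcal{G}_{n,m,\ell}=\mathcal{H}_{n,m,\ell}+\mathcal{I}_{n,m,\ell}$ and
\[
\mathcal{I}_{n,m,\ell}-\mathcal{G}_{n,m,\ell}=\mathcal{G}_{n,m,\ell}-\mathcal{H}_{n,m,\ell}=c_\ell\bigl(c_{\ell+n+m}c_{\ell+m}-c_{\ell+n}c_{\ell+2m}\bigr),
\]
which collapse the theorem to: (a) the sign of $c_{\ell+n+m}c_{\ell+m}-c_{\ell+n}c_{\ell+2m}$; (b) $\mathcal{H}_{n,m,\ell}\ge0$; (c) $\mathcal{G}_{n,m,\ell}\ge0$. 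For (a) I would factor
\[
c_{\ell+n+m}c_{\ell+m}-c_{\ell+n}c_{\ell+2m}=c_{\ell+n}c_{\ell+m}\bigl(r_{\ell+n}r_{\ell+n+1}\dotsm r_{\ell+n+m-1}-r_{\ell+m}r_{\ell+m+1}\dotsm r_{\ell+2m-1}\bigr):
\]
the two products have $m$ factors each, the first being the second with every index shifted by $n-m$, so the sign is that of $n-m$. This yields $\mathcal{H}_{n,m,\ell}\lesseqgtr\mathcal{G}_{n,m,\ell}$ when $m\lessgtr n$ and, once (c) is known, $\mathcal{I}_{n,m,\ell}\ge\mathcal{G}_{n,m,\ell}\ge0$ for $n\ge m$.

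For (b) I would view $\mathcal{H}_{n,m,\ell}$ as the quadratic form in $(c_\ell,c_{\ell+m})$ with matrix $M=\bigl(\begin{smallmatrix}c_{\ell+n+2m}&-c_{\ell+n+m}\\-c_{\ell+n+m}&c_{\ell+n}\end{smallmatrix}\bigr)$; its diagonal entries are positive and $\det M=c_{\ell+n}c_{\ell+n+2m}-c_{\ell+n+m}^2\ge0$ by logarithmic convexity, so $M$ is positive semidefinite and $\mathcal{H}_{n,m,\ell}\ge0$. For (c), the crux, I would aim for the factorization
\[
\mathcal{G}_{n,m,\ell}=c_\ell^2c_{\ell+m}\bigl[(P-Q)(S-R)+Q(S-S')\bigr],
\]
where $P=c_{\ell+2m}/c_{\ell+m}$, $Q=c_{\ell+m}/c_\ell$, $R=c_{\ell+n}/c_\ell$, $S'=c_{\ell+n+m}/c_{\ell+m}$, $S=c_{\ell+n+2m}/c_{\ell+2m}$, each a product of consecutive ratios $r_k$; since $(r_k)$ is nondecreasing, $P\ge Q>0$ and $S\ge S'\ge R>0$, so both summands in the bracket are nonnegative. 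One verifies the identity by writing $\mathcal{G}_{n,m,\ell}=c_\ell A-c_{\ell+n}B$ with $A=c_\ell c_{\ell+n+2m}-c_{\ell+m}c_{\ell+n+m}$ and $B=c_\ell c_{\ell+2m}-c_{\ell+m}^2$, dividing out $c_\ell c_{\ell+m}$, and using $c_{\ell+n}=c_\ell R$. Passing $t\to0^+$ in the analogous statements for $h_\ell$ — or simply reading the above off the sequence $\{c_k\}$ — then completes the proof.

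The main obstacle is step (c). Unlike $\mathcal{H}_{n,m,\ell}$, the quantity $\mathcal{G}_{n,m,\ell}$ is not a positive semidefinite binary form, and $\mathcal{I}_{n,m,\ell}$ need not be nonnegative for a general logarithmically convex sequence (which is why the theorem asserts only $\mathcal{I}\ge\mathcal{G}$, and only under $n\ge m$); hence neither the matrix trick nor the identity $2\mathcal{G}=\mathcal{H}+\mathcal{I}$ alone suffices. What makes the argument work is the precise ``nested ratio'' factorization above — equivalently, the completely monotonic inequality $(-1)^n\bigl[f^{(n+2m)}f^2-f^{(n+m)}f^{(m)}f-f^{(n)}f^{(2m)}f+f^{(n)}(f^{(m)})^2\bigr]\ge0$, whose validity (together with its companions) one would either cite from the literature on completely monotonic functions or prove by exactly this ratio bookkeeping applied to $(-1)^kf^{(k)}$ — and the bookkeeping must carefully track how many factors are shifted and by how much, since that is precisely where nonnegativity comes from.
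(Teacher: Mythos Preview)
Your proposal is correct, and it is genuinely different from the paper's proof. The paper does not carry out any of your algebra: it simply cites a theorem of van Haeringen (\cite{haerc-JMAA-1997}) stating that for any completely monotonic $f$ the quantities $G_{n,m}$, $H_{n,m}$, $I_{n,m}$ built from $f,f',\dotsc,f^{(n+2m)}$ satisfy exactly the inequalities in the statement, applies this to $f=h_\ell$, and lets $t\to0^+$ via~\eqref{c(n)h(n)-limit}. That is the whole argument.

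Your route is more elementary and more informative. You reduce everything to positivity and logarithmic convexity of $\{c_k\}$, so in effect you are \emph{reproving} van Haeringen's inequalities in the special case needed (and indeed your ratio bookkeeping transfers verbatim to $(-1)^kf^{(k)}$ for any completely monotonic $f$, as you note). The identities $2\mathcal{G}=\mathcal{H}+\mathcal{I}$ and $\mathcal{I}-\mathcal{G}=\mathcal{G}-\mathcal{H}=c_\ell(c_{\ell+n+m}c_{\ell+m}-c_{\ell+n}c_{\ell+2m})$ cleanly isolate the comparison statements, the positive-semidefinite matrix argument for $\mathcal{H}\ge0$ is immediate, and your factorization
\[
\mathcal{G}_{n,m,\ell}=c_\ell^2c_{\ell+m}\bigl[(P-Q)(S-R)+Q(S-S')\bigr]
\]
checks out term by term and gives $\mathcal{G}\ge0$ from $P\ge Q$, $S\ge R$, $S\ge S'$ alone. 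What the paper's approach buys is brevity and modularity (one citation, one limit); what yours buys is self-containment and the sharper observation that the theorem is really a statement about arbitrary positive logarithmically convex sequences, with the Cauchy numbers entering only through Corollary~\ref{b(n)-log-Convex-thm}.
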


\begin{proof}
In~\cite[Theorem~1 and Remark~2]{haerc-JMAA-1997}, it was obtained that if $f$ is completely monotonic on $(0,\infty)$ and
\begin{align}
G_{n,m}&=(-1)^n\bigl\{f^{(n+2m)}f^2-f^{(n+m)}f^{(m)}f-f^{(n)} f^{(2m)}f+f^{(n)}\bigl[f^{(m)}\bigr]^2\bigr\},\\
H_{n,m}&=(-1)^n\bigl\{f^{(n+2m)}f^2-2f^{(n+m)}f^{(m)}f+f^{(n)}\bigl[f^{(m)}\bigr]^2\bigr\},\\
I_{n,m}&=(-1)^n\bigl\{f^{(n+2m)}f^2-2f^{(n)}f^{(2m)}f+f^{(n)}\bigl[f^{(m)}\bigr]^2\bigr\}
\end{align}
for $n,m\in\mathbb{N}$, then $G_{n,m}\ge0$, $H_{n,m}\ge0$, and
\begin{gather}
H_{n,m}\lesseqgtr G_{n,m} \quad \text{when $m\lessgtr n$},\\
I_{n,m}\ge G_{n,m}\ge0\quad \text{when $n\ge m$}.
\end{gather}
Replacing $f(t)$ by $h_\ell(t)$ in $G_{n,m}$, $H_{n,m}$, and $I_{n,m}$ and simplifying produce
\begin{align*}
G_{n,m}&=(-1)^n\bigl\{h_\ell^{(n+2m)}h_\ell^2-h_\ell^{(n+m)}h_\ell^{(m)}h_\ell-h_\ell^{(n)} h_\ell^{(2m)}h_\ell+h_\ell^{(n)}\bigl[h_\ell^{(m)}\bigr]^2\bigr\},\\
H_{n,m}&=(-1)^n\bigl\{h_\ell^{(n+2m)}h_\ell^2-2h_\ell^{(n+m)}h_\ell^{(m)}h_\ell +h_\ell^{(n)}\bigl[h_\ell^{(m)}\bigr]^2\bigr\},\\
I_{n,m}&=(-1)^n\bigl\{h_\ell^{(n+2m)}h_\ell^2-2h_\ell^{(n)}h_\ell^{(2m)}h_\ell +h_\ell^{(n)}\bigl[h_\ell^{(m)}\bigr]^2\bigr\}.
\end{align*}
Further taking $t\to0^+$ and employing~\eqref{c(n)h(n)-limit} discover
\begin{equation*}
{(\ell!)^3}G_{n,m}=\mathcal{G}_{n,m,\ell},\quad
{(\ell!)^3}H_{n,m}=\mathcal{H}_{n,m,\ell},\quad
{(\ell!)^3}I_{n,m}=\mathcal{I}_{n,m,\ell}.
\end{equation*}
The proof of Theorem~\ref{GHI-thm} is complete.
\end{proof}

\begin{thm}\label{Norlund-add-thm}
If $m\ge1$ and $a_0,a_1,\dotsc,a_m$ be nonnegative integers, then
\begin{equation}\label{Norlund-add-1-final}
\biggl(\frac{c_{a_0}}{a_0!}\biggr)^{m-1}\frac{c_{_{\sum_{k=0}^ma_k}}}{\bigl(\sum_{k=0}^ma_k\bigr)!}\ge \prod_{k=1}^m\frac{c_{a_0+a_k}}{(a_0+a_k)!}
\end{equation}
and
\begin{equation}\label{Norlund-add-2-final}
\biggl|\frac{c_{a_i+a_j}}{(a_i+a_j)!}\biggr|_m\ge0.
\end{equation}
\end{thm}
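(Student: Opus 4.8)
The plan is to use Theorem~\ref{Ber-minimal-thm}, which states that the sequence $\mu_n:=c_n/n!$ is completely monotonic. By the Hausdorff theorem quoted in the proof of that theorem, there exists a non-decreasing bounded function $\alpha$ on $[0,1]$ such that
\begin{equation*}
\frac{c_n}{n!}=\int_0^1 t^n\,\td\alpha(t),\qquad n\ge0;
\end{equation*}
since $c_0=1$, the total variation $\alpha(1)-\alpha(0)$ equals $1$, so $\td\alpha$ may be regarded as a probability measure on $[0,1]$, and since $c_n>0$ for every $n$ by~\eqref{Norlund-No-int-eq}, each $\mu_n>0$. (Alternatively one could invoke a ready-made inequality for completely monotonic sequences and apply it to $\{c_n/n!\}$; the argument below keeps the section self-contained.)

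For~\eqref{Norlund-add-2-final} I would write $\dfrac{c_{a_i+a_j}}{(a_i+a_j)!}=\mu_{a_i+a_j}=\displaystyle\int_0^1 t^{a_i}t^{a_j}\,\td\alpha(t)$, so that the matrix appearing in~\eqref{Norlund-add-2-final} equals $\int_0^1\mathbf v(t)\mathbf v(t)^{\mathsf T}\,\td\alpha(t)$, where $\mathbf v(t)$ is the column vector with entries $t^{a_i}$ for the indices $i$ labelling that determinant. For any real column vector $\mathbf c$,
\begin{equation*}
\mathbf c^{\mathsf T}\biggl(\int_0^1\mathbf v(t)\mathbf v(t)^{\mathsf T}\,\td\alpha(t)\biggr)\mathbf c=\int_0^1\bigl(\mathbf c^{\mathsf T}\mathbf v(t)\bigr)^2\,\td\alpha(t)\ge0,
\end{equation*}
so the matrix is positive semidefinite; hence its determinant is non-negative. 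This is nothing but the classical positive semidefiniteness of a Hausdorff moment matrix.

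For~\eqref{Norlund-add-1-final} I would tilt the measure: put $\td\beta(t)=t^{a_0}\,\td\alpha(t)/\mu_{a_0}$, a probability measure on $[0,1]$ (here $\mu_{a_0}>0$ is used), and set $\nu_n=\int_0^1 t^n\,\td\beta(t)=\mu_{a_0+n}/\mu_{a_0}$. Since every map $t\mapsto t^{a_k}$ is non-decreasing on $[0,1]$, Chebyshev's integral inequality gives
\begin{equation*}
\nu_{p+q}=\int_0^1 t^pt^q\,\td\beta(t)\ge\int_0^1 t^p\,\td\beta(t)\int_0^1 t^q\,\td\beta(t)=\nu_p\nu_q,
\end{equation*}
and iterating this over $k=1,\dots,m$ (the partial sums $t^{a_1+\cdots+a_j}$ remain non-decreasing) yields $\nu_{a_1+\cdots+a_m}\ge\prod_{k=1}^m\nu_{a_k}$. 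Multiplying by $\mu_{a_0}^{\,m}$ and using $a_0+\sum_{k=1}^m a_k=\sum_{k=0}^m a_k$ converts the left-hand side into $\mu_{a_0}^{\,m-1}\mu_{\sum_{k=0}^m a_k}$ and the right-hand side into $\prod_{k=1}^m\mu_{a_0+a_k}$, which is precisely~\eqref{Norlund-add-1-final}.

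Every step is elementary, so the main point is conceptual rather than technical: one must notice that, unlike Theorems~\ref{b(n)-matrix-thm}--\ref{GHI-thm}, the right vehicle here is the completely monotonic \emph{sequence} $\{c_n/n!\}$ together with its Hausdorff moment representation, rather than the completely monotonic \emph{function} $h_\ell(t)$ used elsewhere in the paper (whose derivatives produce $c_{\ell+k}/\ell!$, carrying the wrong factorial in the denominator). The only cautions are the bookkeeping with the extra index $a_0$, the degenerate case where some $a_k=0$ (for which $t^{a_k}\equiv1$ is still non-decreasing and Chebyshev's inequality applies trivially), and ensuring the normalization of $\alpha$ is arranged so that $\td\beta$ is genuinely a probability measure before Chebyshev's inequality is invoked.
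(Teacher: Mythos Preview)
Your proof is correct, but it follows a different route from the paper's. The paper observes that the same kernel $h_n(t)$ used earlier, rewritten as $\mathfrak{h}(t;s)=\int_0^\infty\frac{u+1}{u[(\ln u)^2+\pi^2]}\frac{\td u}{(u+1+t)^{s+1}}$, is completely monotonic \emph{as a function of $s$}, and then applies the ready-made inequalities $[f(x_0)]^{m-1}f\bigl(\sum_{k=0}^m x_k\bigr)\ge\prod_{k=1}^m f(x_0+x_k)$ and $|f(x_i+x_j)|_m\ge0$ for completely monotonic $f$ (cited from Mitrinovi\'c--Pe\v{c}ari\'c) at the points $a_0,\dotsc,a_m$, finally sending $t\to0$ to recover $c_{a_i}/a_i!$. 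So your remark that ``the completely monotonic function $h_\ell(t)$ used elsewhere \dots\ carries the wrong factorial'' is well taken, but the paper's fix is to change the \emph{variable} of complete monotonicity rather than to change the \emph{vehicle} from function to sequence.

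What each approach buys: the paper's argument stays parallel to the earlier theorems and reduces everything to a single citation, at the cost of introducing the auxiliary parameter $t$ and a limit. Your argument is more self-contained and arguably more transparent---the Hausdorff representation $\mu_n=\int_0^1 t^n\,\td\alpha(t)$ has already been established in Theorem~\ref{Ber-minimal-thm}, and from it the determinant inequality is the positive semidefiniteness of a moment (Gram) matrix while the product inequality is an iterated Chebyshev inequality under the tilted probability measure $\td\beta=t^{a_0}\,\td\alpha/\mu_{a_0}$. Both arguments are short; yours avoids external citations, while the paper's keeps a uniform methodology across the section.
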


\begin{proof}
In~\cite{Boll-Un-1991} and~\cite[p.~369 and~374]{mpf-1993}, it was obtained that if $f$ is completely monotonic on $[0,\infty)$ and $m\ge1$, then
\begin{equation}\label{Norlund-add-1}
[f(x_0)]^{m-1}f\Biggl(\sum_{k=0}^mx_k\Biggr)\ge \prod_{k=1}^mf(x_0+x_k)
\end{equation}
and
\begin{equation}\label{Norlund-add-2}
|f(x_i+x_j)|_m\ge0.
\end{equation}
We consider the function~\eqref{h(x)-dfn-eq} from an alternative viewpoint
\begin{equation}\label{h(x)-dfn-eq-s}
\mathfrak{h}(t;s)=\int_0^\infty \frac{u+1}{u[(\ln u)^2 +\pi^2]}\frac{\td u}{(u+1+t)^{s+1}}
\end{equation}
and find that $\mathfrak{h}(t;s)$ is a completely monotonic function of $s\in[0,\infty)$. Replacing the function $f$ and nonnegative numbers $x_0,x_1,\dotsc,x_m$ in~\eqref{Norlund-add-1} and~\eqref{Norlund-add-2} by the function $\mathfrak{h}(t;s)$ and nonnegative integers $a_0,a_1,\dotsc,a_m$ respectively yields
\begin{equation}\label{Norlund-add-1-0}
[\mathfrak{h}(t;a_0)]^{m-1}\mathfrak{h}_t\Biggl(\sum_{k=0}^ma_k\Biggr)\ge \prod_{k=1}^m\mathfrak{h}(t;a_0+a_k)
\end{equation}
and
\begin{equation}\label{Norlund-add-2-0}
|\mathfrak{h}(t;a_i+a_j)|_m\ge0.
\end{equation}
By virtue of~\eqref{b(n)-limit}, we obtain
\begin{equation}
\lim_{t\to0}\mathfrak{h}(t;a_i)=\frac{c_{a_i}}{a_i!}.
\end{equation}
Therefore, taking $t\to0$ in~\eqref{Norlund-add-1-0} and~\eqref{Norlund-add-2-0} leads to~\eqref{Norlund-add-1-final} and~\eqref{Norlund-add-2-final}. The proof of Theorem~\ref{Norlund-add-thm} is complete.
\end{proof}

\begin{rem}
This paper is a slightly revised and corrected version of the preprint~\cite{Norlund-No-CM.tex}.
\end{rem}

\subsection*{Acknowledgements}
The author is grateful to the anonymous referee for his/her careful corrections to and valuable comments on the original version of this paper.
\par
The author appreciates Professor Dr Yi Wang at Dalian University of Technology in China for his sincerely invitation, generously financial support, and valuably academic communication, thanks PhD Chang-Hui Hu for his warmhearted reception, and acknowledges leaders of the School of Mathematical Sciences for their kind hospitality between 3\nobreakdash--7 December 2013.
\par
The author was partially supported by the NNSF under Grant No.~11361038 of China.

\end{document}